\documentclass[12pt]{amsart}

\usepackage{amssymb,amsmath}
\usepackage{amssymb,latexsym}
\usepackage{amsfonts}
\usepackage{amssymb}
\usepackage{stmaryrd}
\usepackage{longtable}
\usepackage{hyperref}
\usepackage{amsmath, geometry, amssymb} 
\numberwithin{equation}{section}
\geometry{a4paper} 
\pagestyle{plain}
\newtheorem{theorem}{Theorem}[section]

\newcommand{\sqr}[2]{{\vcenter{\vbox{\hrule height#2pt
                \hbox{\vrule width#2pt height#1pt \kern#1pt
                \vrule width#2pt}\hrule height#2pt}}}}

\newcommand{\beq}{\begin{equation}}
\newcommand{\eeq}{\end{equation}}
\newcommand{\beqar}{\begin{eqnarray}}
\newcommand{\eeqar}{\end{eqnarray}}
\def\beqars{\begin{eqnarray*}}
\def\eeqars{\end{eqnarray*}}

\DeclareMathOperator{\lcm}{lcm}

\newcommand{\smod}[1]{\hspace{-1mm} \pmod{#1}}

\def \ds{\displaystyle}

\newcommand{\nn}{\mathbb{N}}
\newcommand{\zz}{\mathbb{Z}}
\newcommand{\qq}{\mathbb{Q}}
\newcommand{\cc}{\mathbb{C}}

\newcommand{\hh}{\mathbb{H}}

\allowdisplaybreaks


\begin{document}

\markboth{Zafer Selcuk Aygin}
{Convolution Sums}

\title{Eisenstein Series and Convolution Sums}

\author{Zafer Selcuk Aygin}

\maketitle

\begin{abstract}
We compute Fourier series expansions of weight $2$ and weight $4$ Eisenstein series at various cusps. Then we use results of these computations to give formulas for the convolution sums $ \sum_{a+p b=n}\sigma(a)\sigma(b)$, $ \sum_{p_1a+p_2 b=n}\sigma(a)\sigma(b)$ and $ \sum_{a+p_1 p_2 b=n}\sigma(a)\sigma(b)$ where $p, p_1, p_2$ are primes.\\

\noindent {\it Keywords:} {sum of divisors function, convolution sums, Eisenstein series, Dedekind eta function, eta quotients, modular forms, cusp forms, Fourier series.}\\

\noindent Mathematics Subject Classification: 11A25, 11E20, 11F11, 11F20, 11F30, 11Y35.
\end{abstract}


\section{Introduction}
Let $\nn$, $\nn_0$, $\zz$, $\qq$, $\cc$ and $\hh$ denote the sets of positive integers, non-negative integers, integers, rational numbers, complex numbers and the upper half plane, respectively. Throughout the paper we let $z \in \hh$ and  $q=e^{2 \pi i z}$. Let $N\in\nn$. Let $\Gamma_0(N)$ be  the modular subgroup defined by
\beqars
\Gamma_0(N) = \left\{ \left(
\begin{array}{cc}
a & b \\
c & d
\end{array}
\right)  \mid  a,b,c,d\in \zz ,~ ad-bc = 1,~c \equiv 0 \smod {N}
\right\} .
\eeqars 
We note that the matrices
\beqar
S=\left(
\begin{array}{cc}
	-1 & 0 \\
	0 & 1
\end{array}
\right) \mbox{ and } T=\left(
\begin{array}{cc}
	1 & 1 \\
	0 & 1
\end{array}
\right). \label{eq:14}
\eeqar
are the generators of $\Gamma_0(1) ~(=SL_2(\zz))$. An element {\renewcommand*{\arraystretch}{0.5} $M= \left({\begin{array}{cc}
		a & b \\       c & d \\      \end{array} } \right) \in \Gamma_0(1)$} acts on $\hh \cup \qq \cup \{ \infty \}$ by
\beqars
\ds M(z)=\left\{\begin{array}{ll}
	\frac{az+b}{cz+d} & \mbox{ if } z \neq \infty, \\
	\frac{a}{c} & \mbox{ if } z= \infty .
\end{array} \right.
\eeqars
Let $k \in \nn$. We write $M_k(\Gamma_0(N))$ to denote the space of modular forms of weight $k$ for  $\Gamma_0(N)$, and 
$E_k (\Gamma_0(N))$ and $S_k(\Gamma_0(N))$ to denote the subspaces of Eisenstein forms and cusp forms 
of  $M_k(\Gamma_0(N))$, respectively. 
It is known  (see for example \cite[p. 83]{stein} and \cite{Serre}) that
\beqar
M_k (\Gamma_0(N)) = E_k (\Gamma_0(N)) \oplus S_k(\Gamma_0(N)). \label{eq:10}
\eeqar

Let $A_c=\left(
\begin{array}{cc}
-1 & 0 \\
c & -1
\end{array}
\right)$, then the Fourier series expansion of $f(z) \in M_k(\Gamma_0(N))$ at the cusp $\ds \frac{1}{c}\in \qq \cup \{\infty \}$ is given by the Fourier series expansion of $f(A_c^{-1}z)$ at the cusp $\infty$, see \cite[pg. 35]{Kohler}. If the Fourier series expansion of $f(z)$ at the cusp $\ds \frac{1}{c} $ is given by the infinite sum
\beqars
(cz+1)^k \sum_{n\geq 0} a_n e^{2 \pi i n z_c},
\eeqars
then we use the notation $[n]_cf(z) = a_n$. It is to use $[n]$, instead of $[n]_0$, at the cusp $\infty ~(=1/0)$.

For $k \in \nn_0$ and $n \in \nn$ we define the sum of divisors function by
\beqars
\sigma_k(n)=\sum_{d \mid n}d^k
\eeqars
where $d$ runs through positive divisors of $n$. If $n \notin \nn$ we set $\sigma(n)=0$. When $k=1$, we write $\sigma(n)$ instead of $\sigma_1(n)$. We define the weight $2$ and the weight $4$ Eisenstein series by
\begin{align}
& E_2(z)=1-24\sum_{n \geq 1} \sigma(n) q^n, \label{eq:5} \\
& E_4(z)=1+240\sum_{n \geq 1} \sigma_3(n) q^n, \label{eq:6}
\end{align}
respectively. Let $d \in \nn$, we denote $E_{2}(dz)$ by $E_{(2,d)}(z)$ and $E_{4}(dz)$ by $E_{(4,d)}(z)$. For convenience let us define $L_d(z)=E_2(z)-dE_{(2,d)}(z)$. It is known that 
\beqar
&& L_d(z) \in E_2(\Gamma_0(d)), \label{eq:3}\\
&& E_4(dz) \in E_4(\Gamma_0(d)),   \label{eq:4}
\eeqar
see \cite[Theorem 5.8]{stein}.

Let $r, s \in \nn$. We define
\beqars
W(r,s;n)=\sum_{\substack{ a, b \in \nn,\\ar+bs=n}}\sigma(a)\sigma(b)
\eeqars
to be the convolution of sum of divisors function. The formula 
\begin{align*}
W(1,1;n)=\frac{5}{12} \sigma_3(n)+\frac{1-6n}{12} \sigma(n)
\end{align*}
appears in the works of Besge, Glaisher and Ramanujan, see \cite{8besge,12glaisher,19ramanujanocaf}, respectively. Since then $W(r,s;n)$ has been calculated for various $(r,s)$. In the table below we grouped the known results according to nature of $r$ and $s$, together with the references.
{\small\begin{center}
	\begin{longtable}{l | l | l}
		\caption{Previously known formulas for convolution sums $W(r,s;n)$ } \label{table:6}
		\endhead
		Nature of $r$ and $s$ & $(r,s)$ & References   \\
		\hline
		$(r,s)=(1,1)$ & $ (1,1) $ & \cite{8besge,12glaisher,19ramanujanocaf} \\
		$(r,s)=(1,p)$ & $ (1,2),(1,3),(1,5),(1,7), (1, 11), (1, 13), (1, 23) $ & \cite{ chancooper, 10coopertoh, 13huard,15lemire}  \\
		$ (r,s)=(p_1,p_2) $  & $(2,3), (2, 5), (3, 5) $  & \cite{7alacawilliams,11cooperye,18ramaksahu} \\
		$ (r,s)=(1,p_1p_2) $  &  $ (1,6), (1, 10), (1, 14), (1, 15)$  & \cite{7alacawilliams,18ramaksahu,20royer} \\
		$ (r,s)=(1,p^i) $  & $ (1,4),(1,8),(1,9),  (1, 16), (1, 25), (1,27), (1,32) $  & \cite{6aaw,alacakesici,13huard,21williams,22williams2,23xia} \\
		$ (r,s)=(1,p_1^ip_2^j) $  & $ (1, 12),(1, 18), (1, 20), (1, 24), (1, 36) $  & \cite{3aaw,5aaw,11cooperye} \\
		$ (r,s)=(p_1^i,p_2^j) $  & $  (3, 4), (2, 9), (4, 5), (3, 8), (4, 9) $  & \cite{4aaw,5aaw,11cooperye}
	\end{longtable}
\end{center}}
In this paper we give formulas for $W(1,p;n)$, $W(p_1,p_2;n)$ and $W(1,p_1p_2;n)$. In Theorem \ref{th:5} we give the precise expression for the Eisenstein part of the formulas for $W(1,p;n)$, $W(p_1,p_2;n)$ and $W(1,p_1p_2;n)$ for all primes $p, p_1, p_2$ such that $p_1\neq p_2$. These formulas generalize the previously known formulas in rows 2--4 referenced in Table \ref{table:6}. Our method proves the existence of the cusp forms satisfying the formulas but fails to provide a general expression for the cusp part. Later on in the paper we will give the cusp part of the formulas for $W(r,s;n)$ with $(r,s)=(1,2),$ $(1,3),$ $(1,5),$ $(1,7),$ $(1,11),$ $(2,3),$ $(2,5),$ $(2,7),$ $(3,5),$ $(1,6),$ $(1,10),$ $(1,14),$ $(1,15)$ in terms of linear combinations of eta quotients. 

The structure of the paper is as follows. In the next section we give the statements of main results. In the third section we derive Fourier series expansions of weight $2$ and weight $4$ Eisenstein series at various cusps. Then, in Section \ref{sec:4}, we use the first terms of Fourier series expansions of Eisenstein series at certain cusps to prove the main results. In Section \ref{sec:5} we give a description for the cusp forms $C_{(r,s)}(z)$ with $(r,s)=(1,2),$ $(1,3),$ $(1,5),$ $(1,7),$ $(1,11),$ $(2,3),$ $(2,5),$ $(2,7),$ $(3,5),$ $(1,6),$ $(1,10),$ $(1,14),$ $(1,15)$ in terms of eta quotients. Finally in Section \ref{sec:6}, for the interested reader, we describe how to extend the results of this paper to give formulas for $W(r,s;n)$ with $\lcm(r,s)$ a square-free, two times a square-free number or four times a square-free number. 

\section{Main Results}
In this section we state the main results.

\begin{theorem}\label{th:1}
Let $p$ be a prime. Then there exists a cusp form $C_{(1,p)}(z) \in S_4(\Gamma_0(p))$ such that 
\begin{align}
\ds & (L_p(z))^2=\frac{(p-1)^2}{p^2+1}E_4(z)+\frac{p^2(p-1)^2}{p^2+1}E_4(pz)+C_{(1,p)}(z). \label{eq:1} 
\end{align}
Let $p_1, p_2$ be primes such that $p_1\neq p_2$. Then there exists cusp forms $C_{(p_1,p_2)}(z),$ $ C_{(1,p_1p_2)}(z) \in S_4(\Gamma_0(p_1p_2))$ such that 
\begin{align}
\ds  L_{p_1}(z)L_{p_2}(z)=&\frac{(p_1^2-p_1+1)(p_2^2-p_2+1)}{(p_1^2+1)(p_2^2+1)}E_4(z)-\frac{p_1^3(p_2^2-p_2+1)}{(p_1^2+1)(p_2^2+1)}E_4(p_1z) \label{eq:2}\\
\ds &-\frac{p_2^3(p_1^2-p_1+1)}{(p_1^2+1)(p_2^2+1)}E_4(p_2z)+\frac{p_1^3p_2^3}{(p_1^2+1)(p_2^2+1)}E_4(p_1p_2z)+C_{(p_1,p_2)}(z). \nonumber
\end{align}
\begin{align}
	\ds  (L_{p_1p_2}(z))^2=&\left(1-\frac{2p_1 p_2}{(p_1^2+1)(p_2^2+1)}\right)E_4(z)-\frac{2p_1^3 p_2}{(p_1^2+1)(p_2^2+1)}E_4(p_1z) \label{eq:16}\\
	\ds &-\frac{2p_1 p_2^3}{(p_1^2+1)(p_2^2+1)}E_4(p_2z)+\left(p_1^2p_2^2- \frac{2p_1^3p_2^3}{(p_1^2+1)(p_2^2+1)} \right)E_4(p_1p_2z)\nonumber \\
	&+C_{(1,p_1p_2)}(z). \nonumber
\end{align}

\end{theorem}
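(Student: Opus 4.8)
The plan is to view $(L_{p_1p_2}(z))^2$ as a weight $4$ modular form on $\Gamma_0(p_1p_2)$ and to determine its Eisenstein part by comparing constant Fourier coefficients at every cusp. By \eqref{eq:3}, $L_{p_1p_2}(z)\in E_2(\Gamma_0(p_1p_2))\subseteq M_2(\Gamma_0(p_1p_2))$, and since the product of two weight $2$ modular forms on $\Gamma_0(N)$ is a weight $4$ modular form on $\Gamma_0(N)$, we have $(L_{p_1p_2}(z))^2\in M_4(\Gamma_0(p_1p_2))$. By the decomposition \eqref{eq:10} it therefore splits uniquely as an Eisenstein form plus a cusp form $C_{(1,p_1p_2)}(z)\in S_4(\Gamma_0(p_1p_2))$.

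Next I would exhibit a basis of the Eisenstein subspace. Since $p_1p_2$ is squarefree with exactly two prime divisors, $\Gamma_0(p_1p_2)$ has four cusps, which may be taken to be $\infty$, $0$, $1/p_1$ and $1/p_2$; for weight $4$ this equals $\dim E_4(\Gamma_0(p_1p_2))=4$. The forms $E_4(z)$, $E_4(p_1z)$, $E_4(p_2z)$, $E_4(p_1p_2z)$ all lie in $E_4(\Gamma_0(p_1p_2))$ by \eqref{eq:4}, and they are linearly independent: in a vanishing linear combination, inspecting the coefficients of $q$, then $q^{p_1}$, then $q^{p_2}$, and finally the constant term forces every coefficient to vanish (here $p_1\neq p_2$ is used). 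Hence these four forms are a basis and we may write
\beqars
(L_{p_1p_2}(z))^2=\alpha E_4(z)+\beta E_4(p_1z)+\gamma E_4(p_2z)+\delta E_4(p_1p_2z)+C_{(1,p_1p_2)}(z)
\eeqars
for unique $\alpha,\beta,\gamma,\delta\in\cc$.

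To pin down $\alpha,\beta,\gamma,\delta$ I would evaluate the constant term at each of the four cusps. A cusp form has vanishing constant term at every cusp, so matching these yields a $4\times4$ linear system whose left-hand sides are $([0]_cL_{p_1p_2}(z))^2$: at a cusp the factor $(cz+1)^4$ splits off, so the constant term of the square is the square of the constant term of $L_{p_1p_2}$. The coefficient matrix is built from the constant terms $[0]_cE_4(dz)$ for $d\in\{1,p_1,p_2,p_1p_2\}$. Solving this system and verifying that the solution reproduces $1-\tfrac{2p_1p_2}{(p_1^2+1)(p_2^2+1)}$, $-\tfrac{2p_1^3p_2}{(p_1^2+1)(p_2^2+1)}$, $-\tfrac{2p_1p_2^3}{(p_1^2+1)(p_2^2+1)}$ and $p_1^2p_2^2-\tfrac{2p_1^3p_2^3}{(p_1^2+1)(p_2^2+1)}$ then gives \eqref{eq:16}. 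The identical argument applied to $(L_p(z))^2$ and to $L_{p_1}(z)L_{p_2}(z)$ establishes \eqref{eq:1} and \eqref{eq:2}, so all three assertions of the theorem are proved simultaneously.

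The main obstacle is computing the constant terms $[0]_cL_{p_1p_2}(z)$ and $[0]_cE_4(dz)$ at the cusps other than $\infty$, where the expansions are not immediate. Since $E_2$ is only quasimodular, its transformation under the matrices $A_c$ carries an extra correction term; the essential point is that this correction cancels in the genuinely modular combination $L_{p_1p_2}=E_2-p_1p_2E_{(2,p_1p_2)}$, so that its cusp expansions, and hence their constant terms, are well defined. Carrying out these transformations carefully is exactly the content of the Fourier-expansion computations of Section 3, after which the proof reduces to the linear algebra described above.
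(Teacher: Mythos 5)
Your proposal is correct and takes essentially the same route as the paper: decompose the product of weight $2$ forms via $M_4(\Gamma_0(N))=E_4(\Gamma_0(N))\oplus S_4(\Gamma_0(N))$, take $\{E_4(dz):d\mid N\}$ as a basis of the Eisenstein subspace (the paper cites \cite[Theorem 5.9]{stein} where you count cusps and verify independence by hand), and determine the coefficients by matching constant Fourier terms at the cusps $1/c$ with $c\mid N$, the needed cusp expansions of $L_d$ being exactly the Section 3 computations (Theorems \ref{th:2} and \ref{th:3} and the accompanying tables). The only cosmetic difference is that you work at the cusp $0$ where the paper uses the $\Gamma_0(N)$-equivalent cusp $1$.
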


We compare the coefficients of $q^n$ on both sides of equations (\ref{eq:1})--(\ref{eq:16}) to get the following theorem.
\begin{theorem} \label{th:5} Let $p, p_1, p_2$ be primes such that $p_1 \neq p_2$, then for all $n\in \nn $ we have
\beqars
& W(1,p;n)=& \frac{5}{12(p^2+1)}  \sigma_3(n)+  \frac{5p^2}{12(p^2+1)} \sigma_3(n/p) + \frac{p-6n}{24p}\sigma(n) \\
&& +\frac{ 1-6 {n} }{24} \sigma(n/p)  -\frac{1}{1152p} [n] C_{(1,p)}(z),\\
& W(p_1,p_2;n)= & \frac{5}{12 (p_1^2+1)(p_2^2+1)} \sigma_3(n)+\frac{5p_1^2}{12 (p_1^2+1)(p_2^2+1)}  \sigma_3(n/p_1)\\
&&+\frac{5p_2^2}{12 (p_1^2+1)(p_2^2+1)} \sigma_3(n/p_2) +\frac{5p_1^2p_2^2}{12 (p_1^2+1)(p_2^2+1)} \sigma_3(n/p_1p_2)\\
&&+\frac{ p_2 - 6 n}{24 p_2} \sigma(n/p_1)  +\frac{p_1 - 6n  }{24p_1} \sigma(n/p_2) - \frac{1}{1152p_1p_2}[n] C_{(p_1,p_2)}(z)\\
& W(1,p_1p_2;n)= & \frac{5}{12 (p_1^2+1)(p_2^2+1)} \sigma_3(n)+\frac{5p_1^2}{12 (p_1^2+1)(p_2^2+1)}  \sigma_3(n/p_1)\\
&&+\frac{5p_2^2}{12 (p_1^2+1)(p_2^2+1)} \sigma_3(n/p_2) +\frac{5p_1^2p_2^2}{12 (p_1^2+1)(p_2^2+1)} \sigma_3(n/p_1p_2)\\
&&+\frac{ p_1p_2 - 6 n}{24 p_1 p_2} \sigma(n)  +\frac{1 - 6n  }{24} \sigma(n/p_1p_2) - \frac{1}{1152p_1p_2}[n] C_{(1,p_1p_2)}(z),
\eeqars
where $C_{(1,p)}(z)$, $C_{(1,p_1p_2)}(z)$ and $C_{(p_1,p_2)}(z)$ are the cusp forms from Theorem \ref{th:1}. 
\end{theorem}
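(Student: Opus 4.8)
The plan is to obtain each of the three formulas by comparing the coefficient of $q^n$ ($n\geq 1$) on the two sides of the corresponding identity (\ref{eq:1}), (\ref{eq:2}), (\ref{eq:16}) from Theorem \ref{th:1}, after rewriting every factor through the defining $q$-expansions. The one computation I would set up once and reuse is the product rule: since $E_2(rz)=1-24\sum_{m\geq 1}\sigma(m)q^{rm}$, for $n\geq 1$ one has
\[
[q^n]\bigl(E_2(rz)E_2(sz)\bigr)=-24\,\sigma(n/r)-24\,\sigma(n/s)+576\,W(r,s;n),
\]
with $\sigma(n/r)=0$ unless $r\mid n$ and $W(r,s;n)=W(s,r;n)$. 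On the right-hand sides I would use $[q^n]E_4(dz)=240\,\sigma_3(n/d)$ and write the Fourier coefficient of the cusp form at $\infty$ as $[n]C(z)$. I would also record the reductions $W(p,p;n)=W(1,1;n/p)$ and $W(p_1p_2,p_1p_2;n)=W(1,1;n/p_1p_2)$.

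For (\ref{eq:1}) I would expand $(L_p(z))^2=E_2(z)^2-2p\,E_2(z)E_2(pz)+p^2E_2(pz)^2$; the product rule then shows that $W(1,p;n)$ occurs with coefficient $-1152p$, while the only other convolution sums present are $W(1,1;n)$ and $W(1,1;n/p)$, both given by the Besge--Glaisher--Ramanujan value $W(1,1;n)=\frac{5}{12}\sigma_3(n)+\frac{1-6n}{12}\sigma(n)$. Equating with the right-hand coefficient and solving the resulting linear equation for $W(1,p;n)$ produces the first formula, the cusp term being $-\frac{1}{1152p}[n]C_{(1,p)}(z)$. Equation (\ref{eq:16}) runs on exactly the same lines: expanding $(L_{p_1p_2}(z))^2$ isolates $W(1,p_1p_2;n)$ with coefficient $-1152p_1p_2$ and leaves only the Besge sums $W(1,1;n)$ and $W(1,1;n/p_1p_2)$, so the third formula follows by the same solve.

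The case of (\ref{eq:2}) is the one that needs care. Expanding $L_{p_1}(z)L_{p_2}(z)=E_2(z)^2-p_2E_2(z)E_2(p_2z)-p_1E_2(p_1z)E_2(z)+p_1p_2E_2(p_1z)E_2(p_2z)$ brings in, besides the target $W(p_1,p_2;n)$ (coefficient $576p_1p_2$) and the Besge sum $W(1,1;n)$, the two cross sums $W(1,p_1;n)$ and $W(1,p_2;n)$, which are not reducible to $W(1,1;\cdot)$. For these I would substitute the first formula already proved. The $E_4$-coefficients then recombine through the rational-function identity
\[
(p_1^2-p_1+1)(p_2^2-p_2+1)-(p_1^2+1)(p_2^2+1)+p_2(p_1^2+1)+p_1(p_2^2+1)=p_1p_2,
\]
which is exactly the cancellation that gives $\sigma_3(n)$ the stated coefficient $\frac{5}{12(p_1^2+1)(p_2^2+1)}$; analogous simplifications fix the three other $\sigma_3$ terms, force the $\sigma(n)$ and $\sigma(n/p_1p_2)$ contributions to cancel, and assemble the $\sigma(n/p_1),\sigma(n/p_2)$ terms into the stated shape.

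The main obstacle I anticipate is the bookkeeping in this third solve. First, the several rational functions of $p_1,p_2$ multiplying each divisor function must be shown to collapse to the clean coefficients in the statement (and two of them to zero); this is routine but must be tracked carefully. Second, and more conceptually, the cusp contributions coming from $C_{(1,p_1)}(z)$, $C_{(1,p_2)}(z)$ and from the cusp form of (\ref{eq:2}) combine into a single element of $S_4(\Gamma_0(p_1p_2))$ (all three summands lie there since $p_i\mid p_1p_2$), and one must confirm that this net cusp form is precisely the $C_{(p_1,p_2)}(z)$ recorded in the formula. By contrast, the formulas for $W(1,p;n)$ and $W(1,p_1p_2;n)$ need nothing beyond the elementary coefficient comparison and the classical value of $W(1,1;n)$.
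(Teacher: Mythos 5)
Your plan for $W(1,p;n)$ and $W(1,p_1p_2;n)$ is complete and is exactly the paper's (one-sentence) proof: in the $q^n$-comparison of (\ref{eq:1}) and (\ref{eq:16}) the only convolution sums besides the target are $W(1,1;n)$ and $W(1,1;n/p)$ (resp.\ $W(1,1;n/p_1p_2)$), the target carries coefficient $-1152p$ (resp.\ $-1152p_1p_2$), and solving with the Besge--Glaisher--Ramanujan value gives the printed formulas; I have checked the resulting coefficient collapses and they are all as you describe, including your identity $(p_1^2-p_1+1)(p_2^2-p_2+1)-(p_1^2+1)(p_2^2+1)+p_2(p_1^2+1)+p_1(p_2^2+1)=p_1p_2$.

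The genuine gap is precisely the step you flagged and deferred in the middle case, and it is fatal to the statement as printed. Push your own substitution to the end: in the $q^n$-comparison of (\ref{eq:2}) the cross sums enter as $-576p_iW(1,p_i;n)$, so after substituting your first formula each contributes a cusp term $-\tfrac{1}{2}[n]C_{(1,p_i)}(z)$ alongside $[n]C_{(p_1,p_2)}(z)$, and dividing by $576p_1p_2$ the cusp part of the resulting formula is
\begin{align*}
\frac{1}{576p_1p_2}[n]C_{(p_1,p_2)}(z)-\frac{1}{1152p_1p_2}[n]\bigl(C_{(1,p_1)}(z)+C_{(1,p_2)}(z)\bigr)
=-\frac{1}{1152p_1p_2}[n]\bigl(C_{(1,p_1)}(z)+C_{(1,p_2)}(z)-2C_{(p_1,p_2)}(z)\bigr),
\end{align*}
not $-\frac{1}{1152p_1p_2}[n]C_{(p_1,p_2)}(z)$. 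Matching the printed formula would require $3C_{(p_1,p_2)}(z)=C_{(1,p_1)}(z)+C_{(1,p_2)}(z)$, which fails: for $(p_1,p_2)=(2,3)$ one has $S_4(\Gamma_0(2))=S_4(\Gamma_0(3))=\{0\}$, so $C_{(1,2)}(z)=C_{(1,3)}(z)=0$ (as in Theorem \ref{th:4}), while $C_{(2,3)}(z)=-\frac{144}{5}\eta[1^2,2^2,3^2,6^2](z)\neq 0$. Concretely, at $n=1$ the printed formula gives $W(2,3;1)=\frac{1}{120}+\frac{1}{240}=\frac{1}{80}$ and at $n=5$ it gives $\frac{43}{40}$, whereas $W(2,3;1)=0$ and $W(2,3;5)=1$; your net cusp term yields $0$ and $1$ correctly, and for $(2,3)$ recovers the Alaca--Williams evaluation. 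So the confirmation you hoped to make cannot succeed: what your method actually proves is the middle formula with cusp part $-\frac{1}{1152p_1p_2}[n]\bigl(C_{(1,p_1)}(z)+C_{(1,p_2)}(z)-2C_{(p_1,p_2)}(z)\bigr)$, and it thereby exposes an error in the theorem as stated --- the paper's terse ``compare coefficients'' proof evidently dropped the cusp contributions of the cross sums $W(1,p_1;n)$ and $W(1,p_2;n)$. To finish your write-up, compute this net cusp form explicitly rather than asserting it equals the $C_{(p_1,p_2)}(z)$ of Theorem \ref{th:1}.
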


Note that Chan and Cooper in \cite{chancooper} gave the equation for $W(1,p;n)$, valid for $p=3,7,11,23$. The closed form they gave for Eisenstein part of the formula is the same as Eisenstein part of the formula for $W(1,p;n)$ given in the previous theorem.  

\section{Fourier series expansions of weight $2$ and weight $4$ Eisenstein series at various cusps}
In this section we find the Fourier series expansion of weight $2$ and weight $4$ Eisenstein series at various cusps. The results of this section will be used to prove Theorem \ref{th:1}.
The transformation formula for $E_2(z)$ under the matrices $T$ and $S$ are given by
\beqar
&& E_2(Tz)=E_2(z+1)=E_2(z), \label{eq:15}\\
&& E_2(Sz)=E_2(-1/z)=z^2\left( E_2(z)- \frac{1}{2 \pi i z}  \right), \label{eq:12}
\eeqar
see \cite[Prop. 2.9]{Kilford}. 
\begin{theorem} \label{th:2}
Let $1<t \in \nn$. The Fourier series expansion of $L_t(z)$ at cusp $1 \in \qq$ is given by
\begin{align}
L_t(A_1^{-1}z) = (z+1)^2 \left( E_2(z) - \frac{1}{t} E_2\left( \frac{z+1}{t} \right) \right). \label{eq:17}
\end{align}
Let $p_1,p_2$ be prime. The Fourier series expansions of $L_{p_1p_2}(z)$ at cusps $1/{p_1}, 1/{p_2} \in \qq$ are given by
\begin{align}
& L_{p_1p_2}(A_{p_1}^{-1}z) = \left({p_1 z +1 }\right)^2 \left(  E_2(z) -\frac{p_1}{p_2} E_{2} \left(\frac{p_1z+1}{p_2} \right) \right) \mbox{ and }\label{eq:18}\\
& L_{p_1p_2}(A_{p_2}^{-1}z) = \left({p_2 z +1 }\right)^2 \left(  E_2(z) -\frac{p_2}{p_1} E_{2} \left(\frac{p_2z+1}{p_1} \right) \right),\label{eq:19}
\end{align}
respectively.
\end{theorem}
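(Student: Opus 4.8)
The plan is to reduce all three identities to two facts about $E_2$ already recorded in the excerpt: its periodicity $E_2(z+1)=E_2(z)$ from \eqref{eq:15}, and its quasi-modular behaviour under $S$ from \eqref{eq:12}. Since $S$ and $T$ generate $\Gamma_0(1)$, these two facts yield a single general transformation law
\[
E_2\!\left(\tfrac{az+b}{cz+d}\right)=(cz+d)^2 E_2(z)-\frac{c(cz+d)}{2\pi i},\qquad \left(\begin{smallmatrix} a&b\\ c&d\end{smallmatrix}\right)\in\Gamma_0(1),
\]
whose defect term reduces to $-\frac{z}{2\pi i}$ for $S$, consistent with \eqref{eq:12}. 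I would first record this formula, together with the computation $A_c^{-1}z=\frac{z}{cz+1}$ obtained by inverting $A_c$. The entire proof is then bookkeeping with this one displayed law: at each cusp I split $L$ into its two $E_2$-summands, transform each summand, and watch the non-holomorphic defect terms cancel.

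For \eqref{eq:17} I would write $L_t(A_1^{-1}z)=E_2\!\left(\frac{z}{z+1}\right)-t\,E_2\!\left(\frac{tz}{z+1}\right)$. The first summand is immediate: $\frac{z}{z+1}=\left(\begin{smallmatrix}1&0\\1&1\end{smallmatrix}\right)z$ lies in $\Gamma_0(1)$, so it equals $(z+1)^2E_2(z)-\frac{z+1}{2\pi i}$. For the second summand the key move is the substitution $w=\frac{z+1}{t}$: a one-line computation gives $\frac{tz}{z+1}=t-\frac{t}{z+1}=t-\frac1w$, so periodicity removes the integer $t$ and $E_2\!\left(\frac{tz}{z+1}\right)=E_2(-1/w)=w^2E_2(w)-\frac{w}{2\pi i}$. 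Multiplying by $t$ and using $tw^2=\frac{(z+1)^2}{t}$ turns this into $\frac{(z+1)^2}{t}E_2\!\left(\frac{z+1}{t}\right)$ plus a defect $+\frac{z+1}{2\pi i}$ that exactly matches, with opposite sign, the defect of the first summand. The two defects cancel and \eqref{eq:17} drops out.

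Identities \eqref{eq:18} and \eqref{eq:19} follow the identical template with $c=p_1$ (resp. $c=p_2$). Here $A_{p_1}^{-1}z=\frac{z}{p_1z+1}=\left(\begin{smallmatrix}1&0\\p_1&1\end{smallmatrix}\right)z$ handles the first summand, and for the second I would use $\frac{p_1p_2z}{p_1z+1}=p_2-\frac{p_2}{p_1z+1}=p_2-\frac1w$ with $w=\frac{p_1z+1}{p_2}$; periodicity and \eqref{eq:12} then give the claimed shape after multiplying by $p_1p_2$ and simplifying $p_1p_2\,w^2=\frac{p_1}{p_2}(p_1z+1)^2$, while the defects $\mp\frac{p_1(p_1z+1)}{2\pi i}$ again cancel. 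Formula \eqref{eq:19} is the same computation with the roles of $p_1$ and $p_2$ interchanged.

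The one genuinely delicate point is the middle step: choosing the Möbius substitution $w$ so that the awkward argument $\frac{mz}{cz+1}$ becomes an integer minus $1/w$, which is precisely what lets periodicity plus the single $S$-transformation do all the work without invoking the transformation law a second time in a messier form. Everything else is the cancellation of the quasi-modular defect terms, and this cancellation is structural rather than accidental: the combination $E_2(z)-m\,E_2(mz)$ is exactly the one for which the defects are engineered to vanish, which is in fact the reason $L_d\in E_2(\Gamma_0(d))$ as recorded in \eqref{eq:3}. I do not anticipate any obstacle beyond carefully tracking the scalar factors $t$, $p_1$, $p_2$ through the substitutions.
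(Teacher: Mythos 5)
Your proposal is correct and is essentially the paper's own proof in different packaging: your substitution turning $\frac{tz}{z+1}$ (resp.\ $\frac{p_1p_2z}{p_1z+1}$) into an integer minus $1/w$ with $w=\frac{z+1}{t}$ (resp.\ $w=\frac{p_1z+1}{p_2}$) is precisely the paper's decomposition of these arguments as $S^2T^{t}S(w)$, $ST^{-p_1}S(z)$ and $S^2T^{-p_2}S(w)$, after which both arguments apply periodicity and \eqref{eq:12} and conclude by the same cancellation of the $\frac{1}{2\pi i}$-defect terms between the two summands of $L$. Stating the general quasi-modular law for $\Gamma_0(1)$ up front, only to use it for the matrices $\left(\begin{smallmatrix}1&0\\c&1\end{smallmatrix}\right)=ST^{-c}S^{-1}$, is a harmless repackaging of the generator computation rather than a different method.
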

\begin{proof} Let $t \in \nn$ and the matrices $S$ and $T$ are as in (\ref{eq:14}). We have
\begin{align}
E_{(2,t)}(A_1^{-1}z)&=E_2\left(S^2T^{t}S\left(\frac{z+1}{t}\right)\right) \nonumber\\
& =E_2\left(T^{t}S\left(\frac{z+1}{t}\right)\right) \nonumber\\
& =E_2\left(S\left(\frac{z+1}{t}\right)\right)\nonumber\\
& =\left(\frac{z+1}{t}\right)^2\left( E_2\left(\frac{z+1}{t}\right)- \frac{t}{2 \pi i \left({z+1}\right)}  \right). \label{eq:13}
\end{align}
where in the first, second and third steps we use $E_2(S^{2}(z))=E_2(z)$, $E_2(T^{t}(z))=E_2(z)$ and (\ref{eq:12}), respectively. Let $1<t \in \nn$. Then we use (\ref{eq:13}) to get
\begin{align*}
 L_t(A_1^{-1}z)&=E_{2}(A_1^{-1}z)-t E_{(2,t)}(A_1^{-1}z)\\
&= \left({z+1}\right)^2\left( E_2\left({z+1}\right)- \frac{1}{2 \pi i \left({z+1}\right)}  \right) \\
& ~~~~- t\left(\frac{z+1}{t}\right)^2\left( E_2\left(\frac{z+1}{t}\right)- \frac{t}{2 \pi i \left({z+1}\right)}  \right)\\
&= \left({z+1}\right)^2\left( E_2\left({z+1}\right)  - \frac{1}{t} E_2\left(\frac{z+1}{t}\right) \right),
\end{align*}
which proves (\ref{eq:17}).
Similarly, by using $E_2(T^{t}(z))=E_2(z)$ and (\ref{eq:12}) we find
\begin{align}
E_{2}(A_{p_1}^{-1}z)&=E_{2}(S T^{-p_1} S z)\nonumber\\
&=\left(\frac{-p_1 z -1 }{z}\right)^2 \left(E_{2}(S z)-\frac{z}{2 \pi i \left({-p_1 z -1 }\right)} \right)\nonumber\\
&=\left({p_1 z +1 }\right)^2 \left(  E_2(z)- \frac{p_1}{2 \pi i \left({p_1 z +1 }\right)} \right),\label{eq:20}
\end{align}
and by $E_2(S^{2}(z))=E_2(z)$, $E_2(T^{t}(z))=E_2(z)$ and (\ref{eq:12}) we find
\begin{align}
E_{(2,p_1p_2)}(A_{p_1}^{-1}z)&=E_{2}\left( S^2 T^{-p_2} S \left( \frac{p_1z+1}{p_2} \right) \right)\nonumber \\
&=E_{2}\left( S \left( \frac{p_1z+1}{p_2} \right) \right) \nonumber \\
&=\left( \frac{p_1z+1}{p_2} \right)^2 \left( E_{2} \left( \frac{p_1z+1}{p_2} \right) - \frac{p_2}{2 \pi i \left({p_1z+1} \right)} \right) \label{eq:21}.
\end{align}
Combining (\ref{eq:20}) and (\ref{eq:21}) we have
\begin{align*}
L_{p_1p_2}(A_{p_1}^{-1}z)&=E_{2}(A_{p_1}^{-1}z)- p_1p_2 E_{(2,p_1p_2)}(A_{p_1}^{-1}z)\\
&= \left({p_1 z +1 }\right)^2 \left(  E_2(z)- \frac{p_1}{2 \pi i \left({p_1 z +1 }\right)} \right)\\
&- p_1p_2 \left(\left( \frac{p_1z+1}{p_2} \right)^2 \left( E_{2} \left( \frac{p_1z+1}{p_2} \right) - \frac{p_2}{2 \pi i\left({p_1z+1} \right)} \right)\right)\\
&= \left({p_1 z +1 }\right)^2 \left(  E_2(z) -\frac{p_1}{p_2} E_{2} \left(\frac{p_1z+1}{p_2} \right) \right),
\end{align*}
which proves (\ref{eq:18}). The proof of (\ref{eq:19}) is similar.
\end{proof}
The following theorem is a special case of Theorem 4.1 from \cite{aaarmf}.
\begin{theorem} \label{th:3}
Let $t, c \in \nn$ be such that $c \mid t$. Then Fourier series expansion of $E_{(4,t)}(z)$ at the cusp $1/c$ is given by
\beqars
E_{(4,t)}(A_c^{-1}z)=\left(\frac{c}{t}\right)^4 (cz+1)^4 E_4\left(\frac{c^2 z+c}{t}\right).
\eeqars
\end{theorem}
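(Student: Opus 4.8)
The plan is to reduce the whole statement to the level-one modularity of $E_4$. Since $E_4 \in M_4(\Gamma_0(1))$, it satisfies $E_4(Mz)=(ez+d)^4E_4(z)$ for every $M=\left(\begin{array}{cc} a & b \\ e & d \end{array}\right)\in SL_2(\zz)$. First I would compute the action of $A_c^{-1}$ explicitly. Because $\det A_c=1$ we have $A_c^{-1}=\left(\begin{array}{cc} -1 & 0 \\ -c & -1 \end{array}\right)$, so $A_c^{-1}z=\dfrac{z}{cz+1}$, and therefore by the definition $E_{(4,t)}(z)=E_4(tz)$ we get
\[
E_{(4,t)}(A_c^{-1}z)=E_4\left(\frac{tz}{cz+1}\right).
\]

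The heart of the argument is to realize the argument $\dfrac{tz}{cz+1}$ as $M$ applied to the inner variable $w:=\dfrac{c^2z+c}{t}=\dfrac{c(cz+1)}{t}$ for a suitable $M\in SL_2(\zz)$. Since the target automorphy factor is $(c/t)^4(cz+1)^4$ and $w$ is a constant multiple of $cz+1$, the bottom row of $M$ is forced to be $(1,0)$; then $\det M=1$ together with matching the M\"obius action pins down the remaining entries. Concretely I would set
\[
M=\left(\begin{array}{cc} t/c & -1 \\ 1 & 0 \end{array}\right),
\]
whose entries are integers \emph{precisely because} $c \mid t$, and which has $\det M=1$. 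A direct substitution, using $(t/c)\,w=cz+1$, then gives $Mw=\dfrac{(t/c)w-1}{w}=\dfrac{cz}{w}=\dfrac{tz}{cz+1}$, as required.

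Finally I would apply the modularity of $E_4$ to $M$ and $w$. As the bottom row of $M$ is $(1,0)$, the automorphy factor is $(1\cdot w+0)^4=w^4$, so
\[
E_4\left(\frac{tz}{cz+1}\right)=E_4(Mw)=w^4E_4(w)=\left(\frac{c(cz+1)}{t}\right)^4 E_4\left(\frac{c^2z+c}{t}\right)=\left(\frac{c}{t}\right)^4(cz+1)^4 E_4\left(\frac{c^2z+c}{t}\right),
\]
which is exactly the claimed identity. The computation is entirely mechanical once $M$ is exhibited, so there is no real analytic obstacle here; the only content is the choice of $M$, and the single hypothesis that does any work is $c\mid t$, which is what guarantees that $M$ has integer entries and hence lies in $SL_2(\zz)$ rather than merely in $GL_2(\qq)$. (This is also why the result is only a special case of Theorem 4.1 of \cite{aaarmf}, which handles general cusps where the divisibility need not hold.)
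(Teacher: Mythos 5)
Your proof is correct and is essentially the paper's own argument: the paper factors $A_c^{-1}z$ through the matrix $L=\left(\begin{smallmatrix} -t/c & 1 \\ -1 & 0 \end{smallmatrix}\right)\in SL_2(\zz)$ applied to $w=\frac{c^2z+c}{t}$, and your $M$ is just $-L$, which acts identically by M\"obius transformation and yields the same automorphy factor $w^4$. The role you assign to the hypothesis $c\mid t$ (integrality of the conjugating matrix) matches the paper exactly.
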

\begin{proof}
Let $t, c \in \nn$ be such that $c \mid t$. Let $L=\left(
\begin{array}{cc}
-{t/c} & 1 \\
-1 & 0
\end{array}
\right) \in \Gamma_0(1)$ and $\gamma=\left(
\begin{array}{cc}
-t & 0 \\
-c & -1
\end{array}
\right)$ be matrices. Then since $E_4(z) \in M_4(\Gamma_0(1))$, we have
\beqars
E_{(4,t)}(A_c^{-1}z)=E_4(\gamma z)=E_4\left(L\left(\frac{c^2z+c}{t} \right)  \right)=\left(\frac{c^2z+c}{t} \right)^4 E_4\left(\frac{c^2z+c}{t}  \right),
\eeqars
which proves the assertion.
\end{proof}

The following cusps in Table \ref{table:4} are equivalent in the given modular subgroups.
\begin{center}
	\begin{longtable}{l | l}
		\caption{Equivalence of certain cusps} \label{table:4}
		\endhead
		 Modular Subgroups & Equivalent cusps   \\
		\hline \\[-2mm]
		$\Gamma_0(1)$ & $1 \sim 1/{p_1}\sim 1/{p_2} \sim \infty$\\
	    $ \Gamma_0(p_1) $  & $1 \sim 1/{p_2}$ and $1/{p_1} \sim \infty$  \\
		$ \Gamma_0(p_2) $  & $1 \sim 1/{p_1}$ and $1/{p_2} \sim \infty$ 
	\end{longtable}
\end{center}

We construct Tables \ref{table:1} --\ref{table:3} below by using (\ref{eq:5}), Theorems \ref{th:2}, \ref{th:3}, the definition of cusp forms and equivalence of the cusps given in Table \ref{table:4}. Tables \ref{table:1}, \ref{table:2} and \ref{table:3} will be used to prove (\ref{eq:1}), (\ref{eq:2}) and (\ref{eq:16}), respectively.

\begin{center}
\begin{longtable}{l | c  c c c}
\caption{First terms of modular forms at certain cusps} \label{table:1}
\endhead
cusp $1/c$ & {\rm $[0]_c(L_p(z))^2$} & {\rm $[0]_cE_4(z)$} & {\rm $[0]_cE_4(pz)$} & {\rm $[0]_cC_p(z)$}  \\
\hline \\[-2mm]
$c=0$ & $ (1-p)^2 $  & $ 1 $ & $ 1 $  & $ 0 $  \\
$c=1$  & $ \left(\frac{1-p}{p} \right)^2$ & $ 1 $ & $ \frac{1}{p^4} $ &  $ 0 $ \\
\end{longtable}
\end{center}

{
\small
\begin{longtable}{l | c  c c c c c}
\caption{First  terms of modular forms at certain cusps} \label{table:2}
\endhead
{\rm cusp $1/c$} &  $[0]_c L_{p_1}(z)L_{p_2}(z)$ & $[0]_c E_4(z) $  & $[0]_c E_4(p_1z) $  & $[0]_c E_4(p_2z) $  & $[0]_c E_4(p_1p_2z) $ & $[0]_c C_{(p_1,p_2)}(z) $  \\
\hline \\
$c=0$  & $ (1-p_1)(1-p_2) $  &  $ 1 $ & $ 1 $  & $ 1 $ &  $1  $ &  $0 $  \\ 
$c=1$  & $\ds \frac{(1-p_1)(1-p_2)}{p_1p_2} $ & $1 $  & $ \ds \frac{1}{p_1^4}$ & $ \ds \frac{1}{p_2^4} $ & $ \ds \frac{1}{p_1^4p_2^4} $ &  $0 $ \\ 
 $\ds c={p_1}$ & $ \ds \frac{(1-p_1)(p_2-1)}{p_2} $ &  $ 1 $ & $ 1 $  & $\ds \frac{1}{p_2^4}$  & $\ds \frac{1}{p_2^4}  $&  $0 $\\ 
 $\ds c={p_2}$ & $ \ds \frac{(p_1-1)(1-p_2)}{p_1} $ & $ 1$  & $ \ds \frac{1}{p_1^4}$ & $1 $ & $\ds \frac{1}{p_1^4} $ &  $0 $\\ 
 \end{longtable}
}
{
	\small
	\begin{longtable}{l | c  c c c c c}
		\caption{First  terms of modular forms at certain cusps} \label{table:3}
		\endhead
		{\rm cusp $1/c$} &  $[0]_c (L_{p_1p_2}(z))^2$ & $[0]_c E_4(z) $  & $[0]_c E_4(p_1z) $  & $[0]_c E_4(p_2z) $  & $[0]_c E_4(p_1p_2z) $ & $[0]_c C_{(1,p_1p_2)}(z) $  \\
		\hline \\
		$c=0$  & $ (1-p_1p_2)^2 $  &  $ 1 $ & $ 1 $  & $ 1 $ &  $1  $ &  $0 $  \\ 
		$c=1$  & $\ds \left( \frac{1-p_1p_2}{p_1 p_2}\right)^2 $ & $1 $  & $ \ds \frac{1}{p_1^4}$ & $ \ds \frac{1}{p_2^4} $ & $ \ds \frac{1}{p_1^4p_2^4} $ &  $0 $ \\ 
		$\ds c={p_1}$ & $ \ds \left( 1-\frac{p_1}{p_2}\right)^2 $ &  $ 1 $ & $ 1 $  & $\ds \frac{1}{p_2^4}$  & $\ds \frac{1}{p_2^4}  $&  $0 $\\ 
		$\ds c={p_2}$ & $ \ds \left( 1-\frac{p_2}{p_1}\right)^2 $ & $ 1$  & $ \ds \frac{1}{p_1^4}$ & $1 $ & $\ds \frac{1}{p_1^4} $ &  $0 $\\ 
	\end{longtable}
}

\section{Proof of Theorem \ref{th:1} } \label{sec:4}
Let $p,p_1,p_2$ be primes such that $p_1 \neq p_2$. By (\ref{eq:3}), we have $(L_p(z))^2 \in M_4(\Gamma_0(p))$, and $L_{p_1}(z)L_{p_2}(z),$ $(L_{p_1p_2}(z))^2 \in M_4(\Gamma_0(p_1p_2))$. By \cite[Theorem 5.9]{stein}, the sets of Eisenstein series 
\beqars
&& \{ E_4(z), E_4(pz) \}, \\
&& \{ E_4(z), E_4(p_1z), E_4(p_2z), E_4(p_1p_2z) \}
\eeqars
constitute bases for $E_4(\Gamma_0(p))$ and $E_4(\Gamma_0(p_1p_2))$, respectively. Then by (\ref{eq:10}), we obtain
\beqar
&& (L_p(z))^2=a_1 E_4(z)+a_2 E_4(pz)+C_{(1,p)}(z), \label{eq:7}\\
&& L_{p_1}(z)L_{p_2}(z)=b_1 E_4(z)+b_2 E_4(p_1z)+b_3 E_4(p_2z)+b_4 E_4(p_1p_2z)+C_{(p_1,p_2)}(z),\label{eq:22} \\
&& (L_{p_1p_2}(z))^2=b_1 E_4(z)+b_2 E_4(p_1z)+b_3 E_4(p_2z)+b_4 E_4(p_1p_2z)+C_{(1,p_1p_2)}(z),\label{eq:23}
\eeqar
for some $a_i, b_j \in \cc$, $C_{(1,p)}(z) \in S_4(\Gamma_0(p))$, $C_{(p_1,p_2)}(z),$ and $C_{(1,p_1p_2)}(z) \in S_4(\Gamma_0(p_1p_2))$.

We use the values in Table \ref{table:1} to compare first terms of Fourier series expansions of the functions on both sides of equation (\ref{eq:7}) to obtain the following linear equations.
\beqar
&& (1-p)^2=a_1+a_2, \label{eq:8}\\
&& \left(\frac{p-1}{p} \right)^2= a_1 + a_2 \frac{1}{p^4} \label{eq:9}.
\eeqar
Solving equations (\ref{eq:8}) and (\ref{eq:9}) for $a_1$ and $a_2$ we get the desired result in (\ref{eq:1}).

We similarly prove (\ref{eq:2}) and (\ref{eq:16}) using the values in Table \ref{table:2} in (\ref{eq:22}) and Table \ref{table:3} in (\ref{eq:23}), respectively.

\section{The cusp forms $C_{(r,s)}(z)$ for $(r,s)=(1,2),$ $(1,3),$ $(1,5),$ $(1,7),$ $(1,11),$ $(2,3),$ $(2,5),$ $(2,7),$ $(3,5),$ $(1,6),$ $(1,10),$ $(1,14),$ $(1,15)$} \label{sec:5}
In this section we express $C_{(r,s)}(z)$ $(r,s)=(1,2),$ $(1,3),$ $(1,5),$ $(1,7),$ $(1,11),$ $(2,3),$ $(2,5),$ $(2,7),$ $(3,5),$ $(1,6),$ $(1,10),$ $(1,14),$ $(1,15)$ as linear combinations of eta quotients. The Dedekind eta function $\eta (z)$ is the holomorphic function defined on the upper half plane $\hh$ 
by the product formula
\beqar
\eta (z) = e^{\pi i z/12} \prod_{n=1}^{\infty} (1-e^{2\pi inz}).
\eeqar
Let $N \in \nn$, an eta quotient (of level $N$) is defined to be a finite product of the form
\beqar
f(z) = \prod_{\delta  \mid N } \eta^{r_{\delta}} ( \delta z),
\eeqar
where $\delta$ runs through positive divisors of $N$ and $r_{\delta} \in \zz $, not all zeroes. For convenience we use the notation
\beqars
\prod_{\delta  \mid N } \eta^{r_{\delta}} ( \delta z)=\eta[1^{r_1},\ldots,\delta^{r_\delta},\ldots,N^{r_N}](z),
\eeqars
for an eta quotient.
\begin{theorem}\label{th:4} We express $C_{(r,s)}(z)$ $((r,s)=(,)$ in terms of eta quotients as follows.
\beqars
& C_{(1,2)}(z)=&0,\\
& C_{(1,3)}(z)=&0,\\
& C_{(1,5)}(z)=&\frac{576}{13} \eta[1^4, 5^4](z),\\
& C_{(1,7)}(z)= & \frac{576}{5}\eta[1^5, 2^{-1}, 7^5, 14^{-1}](z)+\frac{2304}{5} \eta[1^2, 2^2, 7^2, 14^2](z),\\
& C_{(1,11)}(z)=& \frac{17280}{61} \eta[1^6, 2^{-2}, 11^6, 22^{-2}](z)  +\frac{118656}{61} \eta[1^4, 11^4](z) \\
&&+ \frac{276480}{61} \eta[1^2, 2^2, 11^2, 22^2](z) +\frac{276480}{61}   \eta[ 2^4, 22^4](z),\\
& C_{(2,3)}(z)=&-\frac{144}{5}\eta[1^2, 2^2, 3^2, 6^2](z),\\
& C_{(2,5)}(z)=&\frac{48}{13}\eta[1^{-2}, 2^8, 5^2](z)-\frac{6000}{13} \eta[1^2, 5^{-2}, 10^8](z),\\
& C_{(2,7)}(z)=&\frac{288}{5}\eta[1^5, 2^{-1}, 7^5, 14^{-1}](z) -\frac{336}{25} \eta[1^{-2}, 2^6, 7^6, 14^{-2}](z), \\
&&+\frac{8064}{25} \eta[1^2, 2^2, 7^2, 14^2](z) -\frac{5136}{25} \eta[1^6, 2^{-2}, 7^{-2}, 14^6](z),\\
& C_{(3, 5)}(z)=& \frac {108}{13}\eta[1^{11}, 2^{-5}, 3^{-5}, 5^{-5}, 6^{3}, 10^3, 15^{11}, 30^{-5}](z) \\
&&-\frac {1584}{13} \eta[[1^{2}, 2^{ -3}, 3^{ -4}, 5^{ -3}, 6^{ 9}, 10^{ 4}, 15^{ 7}, 30^{ -4}](z) \\
&&-\frac {2376}{13}\eta[1^{4}, 2^{ -3}, 3^{ -2}, 5^{ 2}, 6^{ 5}, 10^{ 1}, 15^{ 0}, 30^{ 1}](z)   \\
&&-\frac {5832}{13}\eta[1^{1}, 2^{ -2}, 3^{ -3}, 5^{ 2}, 6^{ 8}, 10^{ -1}, 15^{ 2}, 30^{ 1}](z)  \\
&& +\frac {11448}{13}\eta[1^{2}, 2^{ -2}, 3^{ 0}, 5^{ -2}, 6^{ 2}, 10^{ 6}, 15^{ 0}, 30^{ 2}](z)   \\
&&+\frac {33264}{13}\eta[1^{1}, 2^{ 2}, 3^{ 4}, 5^{ 3}, 6^{ -3}, 10^{ -2}, 15^{ -4}, 30^{ 7}](z)   \\
&&+\frac {10080}{13}\eta[1^{1}, 2^{ 0}, 3^{ -1}, 5^{ -3}, 6^{ 0}, 10^{ 2}, 15^{ 7}, 30^{ 2}](z)   \\
&&+\frac {83520}{13}\eta[1^{1}, 2^{ -1}, 3^{ -2}, 5^{ 1}, 6^{ 4}, 10^{ -1}, 15^{ 0}, 30^{ 6}](z)   \\
&&+\frac {145584}{13}\eta[1^{0}, 2^{ 1}, 3^{ 0}, 5^{ 2}, 6^{ -1}, 10^{ -3}, 15^{ 2}, 30^{ 7}](z) \\
&&-\frac {117720}{13}\eta[1^{-2}, 2^{ 3}, 3^{ 5}, 5^{ 3}, 6^{ -4}, 10^{ -4}, 15^{ -4}, 30^{ 11}](z)   \\
&&-1944 \eta[1^{1}, 2^{ 0}, 3^{ 1}, 5^{ 3}, 6^{ 0}, 10^{ -4}, 15^{ -5}, 30^{ 12}](z)   \\
&&+\frac {158688}{13} \eta[1^{0}, 2^{ 1}, 3^{ 1}, 5^{ 2}, 6^{ -2}, 10^{ -3}, 15^{ -3}, 30^{ 12}](z)  \\
&&+\frac {216000}{13} \eta[1^{-1}, 2^{ 3}, 3^{ 1}, 5^{ 3}, 6^{ -1}, 10^{ -5}, 15^{ -7}, 30^{ 15}](z)   \\
&&+\frac {316224}{13} \eta[1^{0}, 2^{ 1}, 3^{ 2}, 5^{ 2}, 6^{ -3}, 10^{ -3}, 15^{ -8}, 30^{ 17}](z), \\
& C_{(1,6)}(z)=& \frac{288}{5}\eta[1^2, 2^2, 3^2, 6^2](z), \\
& C_{(1,10)}(z)=& \frac{2976}{13}\eta[1^{-1}, 2^5, 5^5, 10^{-1}](z)-\frac{480}{13} \eta[1^5, 2^{-1}, 5^{-1}, 10^5](z),\\
& C_{(1,14)}(z)=& \frac{10272}{25} \eta[1^{-2}, 2^6, 7^6, 14^{-2}](z) -\frac{4608}{25} \eta[1^2, 2^2, 7^2, 14^2](z) \\
&& +\frac{672}{25} \eta[1^6, 2^{-2}, 7^{-2}, 14^6](z),\\
& C_{(1,15)}(z)=& \frac {5976}{13}\eta[1^{11}, 2^{-5}, 3^{-5}, 5^{-5}, 6^{3}, 10^3, 15^{11}, 30^{-5}](z) \\
&&+\frac {74592}{13} \eta[[1^{2}, 2^{ -3}, 3^{ -4}, 5^{ -3}, 6^{ 9}, 10^{ 4}, 15^{ 7}, 30^{ -4}](z) \\
&&-\frac {137808}{13}\eta[1^{4}, 2^{ -3}, 3^{ -2}, 5^{ 2}, 6^{ 5}, 10^{ 1}, 15^{ 0}, 30^{ 1}](z)   \\
&&-\frac {85968}{13}\eta[1^{1}, 2^{ -2}, 3^{ -3}, 5^{ 2}, 6^{ 8}, 10^{ -1}, 15^{ 2}, 30^{ 1}](z) \\
&& +\frac {153072}{13}\eta[1^{2}, 2^{ -2}, 3^{ 0}, 5^{ -2}, 6^{ 2}, 10^{ 6}, 15^{ 0}, 30^{ 2}](z)   \\
&& -\frac {617184}{13}\eta[1^{1}, 2^{ 2}, 3^{ 4}, 5^{ 3}, 6^{ -3}, 10^{ -2}, 15^{ -4}, 30^{ 7}](z)   \\
&& +\frac {1513728}{13}\eta[1^{1}, 2^{ 0}, 3^{ -1}, 5^{ -3}, 6^{ 0}, 10^{ 2}, 15^{ 7}, 30^{ 2}](z)   \\
&& -\frac {2807424}{13}\eta[1^{1}, 2^{ -1}, 3^{ -2}, 5^{ 1}, 6^{ 4}, 10^{ -1}, 15^{ 0}, 30^{ 6}](z)   \\
&& -\frac {943200}{13}\eta[1^{0}, 2^{ 1}, 3^{ 0}, 5^{ 2}, 6^{ -1}, 10^{ -3}, 15^{ 2}, 30^{ 7}](z)   \\
&& +135504 \eta[1^{-2}, 2^{ 3}, 3^{ 5}, 5^{ 3}, 6^{ -4}, 10^{ -4}, 15^{ -4}, 30^{ 11}](z)   \\
&& +\frac {3255696}{13}\eta[1^{1}, 2^{ 0}, 3^{ 1}, 5^{ 3}, 6^{ 0}, 10^{ -4}, 15^{ -5}, 30^{ 12}](z)   \\
&& -\frac {3409344}{13}\eta[1^{0}, 2^{ 1}, 3^{ 1}, 5^{ 2}, 6^{ -2}, 10^{ -3}, 15^{ -3}, 30^{ 12}](z)  \\
&& +\frac {586368}{13}\eta[1^{-1}, 2^{ 3}, 3^{ 1}, 5^{ 3}, 6^{ -1}, 10^{ -5}, 15^{ -7}, 30^{ 15}](z)   \\
&& -\frac {10461312}{13}\eta[1^{0}, 2^{ 1}, 3^{ 2}, 5^{ 2}, 6^{ -3}, 10^{ -3}, 15^{ -8}, 30^{ 17}](z).
\eeqars
\end{theorem}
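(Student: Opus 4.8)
The cases $(1,2)$ and $(1,3)$ are immediate, since $\dim S_4(\Gamma_0(2))=\dim S_4(\Gamma_0(3))=0$ forces $C_{(1,2)}(z)=C_{(1,3)}(z)=0$. For the remaining identities the plan is to recognise both sides as weight-$4$ cusp forms on a common group $\Gamma_0(M)$ and then reduce the claimed equality to a finite comparison of Fourier coefficients via the Sturm bound. By Theorem \ref{th:1}, each $C_{(r,s)}(z)$ is by construction the difference between a product of two forms $L_d(z)$ and an explicit combination of the $E_4(dz)$; hence $C_{(r,s)}(z)\in S_4(\Gamma_0(N))$ with $N=\lcm(r,s)$, and its $q$-expansion is computable to any order from (\ref{eq:5}) and (\ref{eq:6}) by elementary convolution of divisor sums. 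The right-hand sides are linear combinations of eta quotients, whose $q$-expansions follow from the product formula for $\eta(z)$. Writing $D_{(r,s)}(z)$ for the difference of the two sides, it then remains to show $D_{(r,s)}\equiv 0$.

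First I would verify that every eta quotient on the right lies in $S_4(\Gamma_0(M))$ for the relevant $M$, using the criteria of Ligozat (see \cite{Kohler}): for $f(z)=\prod_{\delta\mid M}\eta^{r_\delta}(\delta z)$ one checks $\frac12\sum_{\delta\mid M}r_\delta=4$, the congruences $\sum_{\delta\mid M}\delta r_\delta\equiv 0$ and $\sum_{\delta\mid M}(M/\delta)r_\delta\equiv 0\pmod{24}$, and that $\prod_{\delta\mid M}\delta^{r_\delta}$ is a perfect square, so that the nebentypus is trivial. Strict positivity of the order of $f$ at every cusp, which is exactly what distinguishes a cusp form from a general holomorphic form, is read off from Ligozat's order formula. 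In several cases, namely $(1,7)$, $(1,11)$, $(3,5)$ and $(1,15)$, there are too few level-$N$ eta quotients of weight $4$ to reach the target, and the listed quotients instead have level $2N$; there I would run the verification on $\Gamma_0(2N)$, using that $S_4(\Gamma_0(N))\subseteq S_4(\Gamma_0(2N))$ so that $D_{(r,s)}(z)$ also lies in $S_4(\Gamma_0(2N))$.

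Finally I would match coefficients. Since $D_{(r,s)}(z)\in S_4(\Gamma_0(M))$, the Sturm bound forces $D_{(r,s)}\equiv 0$ as soon as $[q^n]D_{(r,s)}(z)=0$ for every $n\le \frac{4}{12}[SL_2(\zz):\Gamma_0(M)]$; across all the cases this bound never exceeds $24$ (the maximum being attained for the level-$30$ forms $(3,5)$ and $(1,15)$), so only finitely many coefficients need be compared in each case. Computing these on both sides and confirming their agreement completes the proof. The difficulty is computational rather than conceptual: the $(3,5)$ and $(1,15)$ formulas each involve fifteen level-$30$ eta quotients whose orders at all eight cusps of $\Gamma_0(30)$ must be checked for the Ligozat criterion, and whose combined $q$-expansions must be carried accurately out to $q^{24}$. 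Organising these expansions, and confirming that such delicate level-$30$ combinations genuinely descend to the level-$15$ cusp space, is where the real effort concentrates.
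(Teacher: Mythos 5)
Your proposal is correct and follows essentially the same route as the paper: identify $C_{(r,s)}(z)$ explicitly from Theorem \ref{th:1}, verify via Ligozat's criteria that the eta quotients lie in the appropriate cusp space (passing to $S_4(\Gamma_0(2N))$ exactly in the cases $(1,7)$, $(1,11)$, $(3,5)$, $(1,15)$, just as the paper does for $(1,11)$ with $S_4(\Gamma_0(11))\subset S_4(\Gamma_0(22))$), and then compare $q$-expansions up to the Sturm bound. Your added observation that $\dim S_4(\Gamma_0(2))=\dim S_4(\Gamma_0(3))=0$ disposes of the first two cases is a harmless refinement, and your Sturm-bound arithmetic (at most $24$ coefficients, attained at level $30$) is accurate.
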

\begin{proof} Proofs of all these equalities are similar. We basically find combinations of eta quotients whose first couple terms in Fourier series expansion at $\infty$ agrees with the first couple terms in Fourier series expansion of $C_{(r,s)}(z)$ at $\infty$. Then by Sturm Theorem these two modular forms are equal, see  (\cite[Theorem 3.13]{Kilford}) for Sturm Theorem (or Sturm Bound).
To illustrate the proof we show
\beqar
& C_{(1,11)}(z)=& \frac{17280}{61} \eta[1^6, 2^{-2}, 11^6, 22^{-2}](z)  +\frac{118656}{61} \eta[1^4, 11^4](z) \label{eq:26} \\
&&+ \frac{276480}{61} \eta[1^2, 2^2, 11^2, 22^2](z) +\frac{276480}{61}   \eta[ 2^4, 22^4](z).\nonumber
\eeqar
By Theorem \ref{th:1} we have
\begin{align*}
\ds & C_{(1,11)}(z)= (L_{11}(z))^2-\frac{50}{61}E_4(z)-\frac{6050}{61}E_4(11z) \in S_4(\Gamma_0(11)) \subset S_4(\Gamma_0(22)).
\end{align*}
We use MAPLE to expand $(L_{11}(z))^2$, $E_4(z)$ and $E_4(11z)$. Then we find
\begin{align}
\ds C_{(1,11)}(z)=& {\frac {17280}{61}}q+{\frac {14976}{61}}{q}^{2}-{\frac {8064}{61}}{q}
^{3}-{\frac {73728}{61}}{q}^{4}-{\frac {1152}{61}}{q}^{5}-{\frac {
213120}{61}}{q}^{6} \label{eq:24}\\
&+{\frac {182016}{61}}{q}^{7}-{\frac {80640}{61}}{q}
^{8}+{\frac {361728}{61}}{q}^{9}+{\frac {411264}{61}}{q}^{10}-{\frac {
190080}{61}}{q}^{11}\nonumber\\
&-{\frac {377856}{61}}{q}^{12}+O(q^{13}). \nonumber
\end{align}
On the other hand, let
\begin{align*}
f(z)= & \frac{17280}{61} \eta[1^6, 2^{-2}, 11^6, 22^{-2}](z)  +\frac{118656}{61} \eta[1^4, 11^4](z)+ \frac{276480}{61} \eta[1^2, 2^2, 11^2, 22^2](z) \\
&+\frac{276480}{61}   \eta[ 2^4, 22^4](z).
\end{align*}
We have $f(z) \in S_4(\Gamma_0(22))$, see  \cite[Theorem 5.7, p. 99]{Kilford}, \cite[Corollary 2.3, p. 37]{Kohler}, 
\cite[p. 174]{GordonSinor} and \cite{Ligozat}. Again using MAPLE we also compute 
\begin{align}
 f(z)=&{\frac {17280}{61}}q+{\frac {14976}{61}}{q}^{2}-{\frac {8064}{61}}{q}
^{3}-{\frac {73728}{61}}{q}^{4}-{\frac {1152}{61}}{q}^{5}-{\frac {
213120}{61}}{q}^{6} \label{eq:25}\\
&~+{\frac {182016}{61}}{q}^{7}-{\frac {80640}{61}}{q}
^{8}+{\frac {361728}{61}}{q}^{9}+{\frac {411264}{61}}{q}^{10}-{\frac {
190080}{61}}{q}^{11} \nonumber\\
&~ -{\frac {377856}{61}}{q}^{12}+O(q^{13}). \nonumber
\end{align}
That is, by (\ref{eq:24}) and (\ref{eq:25}), we have
\beqars
f(z)-C_{(1,11)}(z)=O(q^{13}).
\eeqars
Thus by Sturm Theorem (see \cite[Theorem 3.13]{Kilford}), we have
\begin{align*}
\ds f(z)-C_{(1,11)}(z)=0,
\end{align*}
which proves (\ref{eq:26}).
\end{proof}
\section{Further extensions} \label{sec:6} 
Let $N_1 \in \nn$ be a square-free number and $N=N_1,2N_1,$ or $4N_1$. Let $r,s\in \nn$ be such that $\lcm(r,s) \mid N$. We can use similar arguments to give formulas for $W(r,s;n)$. We consider $L_r(z) L_s(z) \in M_4(\Gamma_0(N))$. Then by \cite[Theorem 5.9]{stein} and (\ref{eq:10}), we have that there exists $c_d \in \cc$ and $C_{(r,s)}(z) \in S_4(\Gamma_0(N))$, such that
\beqar
L_r(z)L_s(z)=\sum_{d \mid N} c_d E_4(dz) + C_{(r,s)}(z). \label{eq:11}
\eeqar
Now in order to compute $c_d$ we need to find the first terms of the modular forms on both sides of equation (\ref{eq:11}) at the cusps of $\Gamma_0(N)$. Note that, because of the choice of $N$, the set
\beqars
\left\{ \frac{1}{c} : c \mid N \right\}
\eeqars
is a set of cusps of $\Gamma_0(N)$. Then comparing first coefficients of Fourier series expansions of modular forms in (\ref{eq:11}), we will find a set of linear equations. We can determine $c_d$ by solving these equations. Also note that in many cases $C_{(r,s)}(z)$ can be given in terms of eta quotients. Finally a formula for $W(r,s;n)$ can be given by comparing coefficients of $q^n$ on both sides of the equation (\ref{eq:11}). 

\section*{Acknowledgments}  
The author was partially supported by the Singapore Ministry of Education Academic Research Fund, Tier 2, project number MOE2014-T2-1-051, ARC40/14.

\noindent
Zafer Selcuk Aygin\\
Division of Mathematical Sciences \\
School of Physical and Mathematical Sciences \\
Nanyang Technological University \\
21 Nanyang Link, Singapore 637371, Singapore

\vspace{1mm}

\noindent
selcukaygin@ntu.edu.sg

\end{document}